\newtheorem{theorem}{Theorem}
\newcommand{\e}{{\rm e}}
\newtheorem{thm}{Theorem}[section]
\newtheorem{prop}[thm]{Proposition}
\newcommand{\Nor}{{\mathrm{Nor}}\,}
\newcommand{\R}{{\mathbb R}}
\newcommand{\E}{{\mathbb E}}
\newcommand{\N}{{\mathbb N}}
\newcommand{\Q}{{\mathbb Q}}
\def\section{%
\setcounter{equation}{0} \setcounter{theorem}{0} \@startsection
{section}{1}{\z@}{-4.0ex plus -1ex minus
    -.2ex}{2.3ex plus .2ex}{\bf}}
\theoremstyle{definition}
\begin{document}

\title{Determination of Boolean models by mean values of mixed volumes}

\author{Daniel Hug and Wolfgang Weil\\
 \\ {\it Department of Mathematics,
 Karlsruhe Institute of Technology}\\
 {\it 76128 Karlsruhe, Germany}}

\date{December 21, 2017}

\maketitle

\begin{abstract}
\noindent
In Weil (2001) formulas were proved for stationary Boolean models $Z$ in $\R^d$ with convex or polyconvex grains, which express the densities of mixed volumes of $Z$ in terms of related mean values of the underlying Poisson particle process $X$. These formulas were then used to show that in dimensions 2 and 3 the mean values of $Z$ determine the intensity $\gamma$ of $X$. For $d=4$ a corresponding result was also stated, but the proof given was incomplete, since in the formula for the mean Euler characteristic $\overline V_0 (Z)$ a term $\overline V^{(0)}_{2,2}(X,X)$ was missing. This was pointed out in Goodey and Weil (2002), where it was also explained that a new decomposition result for mixed volumes and mixed translative functionals would be needed to complete the proof.

Such a general decomposition result is now available based on flag measures of the convex bodies involved (Hug, Rataj and Weil (2013, 2017)). Here, we show that such flag representations not only lead to a correct derivation of the 4-dimensional result but even yield a corresponding uniqueness theorem in all dimensions. In the proof of the latter, we make use of Alesker's representation theorem for translation invariant valuations. We also discuss which shape information can be obtained in this way and comment on the situation in the non-stationary case.

\medskip

{\em Key words and phrases:} Stationary Boolean model, intensity, mean values, Poisson process, mixed volumes, mixed functionals, translative integral geometry, flag measures

\medskip

{\em AMS 2000 subject classifications.} 60D05, 52A22; secondary  52A39, 60G55, 62M30
\end{abstract}

\section{Introduction}

Let ${\cal K}^d$ be the space of  convex bodies (non-empty compact convex sets) in $\R^d, d\ge 2,$ supplied with the Hausdorff metric. A {\it Boolean model with convex grains} is a random closed set $Z\subset \R^d$, generated as the union set of a Poisson particle process $X$ on ${\cal K}^d$,
$$
Z = \bigcup_{K\in X} K .
$$
We refer to \cite{SW}, for background information on random sets, point processes, Boolean models and all further notions and results from Stochastic Geometry which are used in the following without detailed explanation. If $X$ and $Z$ are {\it stationary} (i.e., their distribution is invariant under translations in $\R^d$), the underlying process $X$ is determined (in distribution) by two quantities, the {\it intensity} $\gamma$ (assumed to be positive and finite) and the distribution $\mathbb Q$ of the {\it typical grain}, a probability measure on the subset ${\cal K}^d_0\subset {\cal K}^d$ of convex bodies with Steiner point at the origin. A major problem in applications, for example when a Boolean model is fitted to given data, is to estimate $\gamma$, the mean number of particles per unit volume, from measurements of the union set $Z$.

If $X$ and $Z$ are, in addition, {\it isotropic} (i.e., their distribution is also invariant under rotations), the classical formulas of Miles and Davy (see \cite[Theorem 9.1.4]{SW}) allow such an estimation. The formulas express the mean values $\overline V_j(Z)$ of the (additively extended) intrinsic volumes $V_j, j=0\dots ,d,$ of the Boolean model $Z$ as a triangular array of the mean values
$$
 \overline V_j(X) := \gamma \int_{ {\cal K}^d_0} V_j(K) \,{\mathbb Q}(dK)
$$
of $X$ and read
\begin{align*}
\overline V_d(Z) &= 1-\e^{-\overline V_d(X)} ,
  \\
  \overline V_{d-1}(Z) &= \e^{-\overline
    V_d(X)}\overline V_{d-1}(X) ,
\end{align*}
and
\begin{align}\label{MilesDavy}
  \overline V_j(Z) &=\e^{-\overline V_d(X)}
  \sum_{\mathbf{m}\in\operatorname{mix}(j)}\frac{(-1)^{|{\mathbf{m}|}-1}}{|{\mathbf{m}|}!}
  c_j^d \prod_{i=1}^{|\mathbf{m}|} c_d^{m_i}\overline V_{m_i}(X),
\end{align}
for $j=0,\dots ,d-2$. Here, for $j\in \{0,\ldots,d-1\}$ we define
$$
\operatorname{mix}(j):=\bigcup_{k=1}^{d-j}\operatorname{mix}(j,k),
$$
where $\operatorname{mix}(j,1):=\{(j)\}$ and
$$
\operatorname{mix}(j,k):=\{(m_1,\ldots,m_k)\in\{j+1,\ldots,d-1\}^k:m_1+\ldots+m_k=(k-1)d+j\}
$$
for $k\in\{2,\ldots,d-j\}$. Moreover, we put $|\mathbf{m}|:=k$ if $\mathbf{m}\in \operatorname{mix}(j,k)$. The
constants involved are given by $c^m_j:=m!\kappa_m/(j!\kappa_j)$, where $\kappa_m=\pi^{m/2}/\Gamma(1+m/2)$ is the
volume of an $m$-dimensional unit ball.

This system of equations can be inverted from top to bottom to yield $\gamma = \overline V_0(X)$ in terms of the mean values $\overline V_j(Z)$ for $j=0,\ldots,d$.

Without the condition of isotropy, the estimation of the intensity remains an interesting and challenging problem, some methods are discussed in Section 9.5 of \cite{SW}. One rather effective method, the so-called method of moments, was discussed in a number of papers for the dimensions 2 and 3 (see the references given in \cite{SW}). It uses mean values of direction dependent functionals like the support function and the area measures (both additively extended to polyconvex sets). In \cite{W01} a unified approach to these local results was given by considering mixed volumes and mixed functionals of translative integral geometry instead of (just) intrinsic volumes. The corresponding mean value formulas are
\begin{align*}
  \overline V_d(Z) &= 1-\e^{-\overline V_d(X)} ,
  \\
  \overline V(Z [d-1], K[1]) &= \e^{-\overline
    V_d(X)}\overline V(X [d-1],K [1]) ,
\end{align*}
and
\begin{align}\label{nonisotropic}
  &\binom{d}{j}\overline V(Z [j], K[d-j] ) \nonumber\\
	&\qquad\qquad =\e^{-\overline V_d(X)}
  \sum_{\mathbf{m}\in\operatorname{mix}(j)}
  \frac{(-1)^{{|\mathbf{m}|}-1}}{|\mathbf{m}|!}\overline{V}_{\mathbf{m},d-j}(X,\ldots,X,K^*) ,
\end{align}
for $j=0,\dots ,d-2$, and all $K\in{\cal K}^d$.
Here, $K^*:=-K$ is the reflection of $K$ in the origin,
$\mathbf{m}=(m_1,\ldots,m_k)\in \operatorname{mix}(j,k)$ and $(\mathbf{m},d-j)\equiv (m_1,\ldots,m_k,d-j)\in
\operatorname{mix}(0,k+1)$, for $k=1, \ldots,d-j$. On the left side of \eqref{nonisotropic}, mean values of mixed volumes of the Boolean model $Z$
are used, whereas on the right, mean values of mixed functionals $V_{{\bf m},d-j}$ of the underlying particle process $X$ are involved. These mixed functionals
  arise from the (iterated) translative integral geometric formula for the intrinsic volumes (especially, from the Euler characteristic $V_0$).
Note that here we simply write $ {V}_{\mathbf{m},d-j}$ and $\overline{V}_{\mathbf{m},d-j}$ instead of $ {V}^{(0)}_{\mathbf{m},d-j}$  and $\overline{V}^{(0)}_{\mathbf{m},d-j}$,  since the upper index is determined by the information provided in the lower index. Furthermore, in the special case where $|\mathbf{m}|=1$, the mixed functionals  are related to
the mixed volumes by \eqref{eqrel}, which implies for the corresponding mean values that
$$
\binom{d}{j}\overline V(X[j], K[d-j])=\overline{V}_{j,d-j}(X,K^*),
$$
for all $K\in{\cal K}^d$.
The question arises whether the knowledge of the mean values $\overline V(Z [j], K[d-j] )$, for $j=0,\dots ,d$, and all $K\in{\cal K}^d$, determines the intensity $\gamma$ of $X$.

Notice that in the marginal cases $j=d$ and $j=0$ we have
 $\overline V(Z [d], K[0] ) = \overline V_d(Z)$ and $\overline V(Z [0], K[d] ) = \overline V_0(Z)V_d(K)$, respectively. Also
$$
\overline V_{\mathbf{m},d}(X,\dots,X, K^*)=\overline{V}_{\mathbf{m}}(X,\ldots,X)V_d(K)
$$
due to the decomposition property of mixed functionals.
If we divide by $V_d(K)$ and make the summation explicit, the equation for $j=0$ reads
\begin{align}\label{Euler}
  &\overline V_0(Z) =\e^{-\overline V_d(X)}\Bigg( \overline V_0(X) -
  \sum_{k=2}^d
  \frac{(-1)^{k}}{k!}\nonumber \\
	&\qquad\qquad\qquad\qquad\qquad \times\sum_{\substack{m_1,\ldots ,m_k=1\\ m_1+\ldots +m_k=
	(k-1)d}}^{d-1}\overline V_{m_1,\dots, m_k}(X,\dots,X)\Bigg).
\end{align}
Thus, in order to determine the intensity $\gamma = \overline V_0(X)$ from this equation, the
 mixed densities $\overline V_{m_1,\dots, m_k}(X,\dots,X)$ have to be obtained, for all indices $m_1,\ldots ,m_k$, by the other equations in \eqref{nonisotropic} for $j=1,\dots ,d$. As was shown in \cite{W01}, this works for dimensions $d=2$ and $d=3$. The main point is that, in these small dimensions, only mixed volumes occur which are of the form
\begin{equation}\label{d-1-case}
V(K[1],M[d-1]) = \frac{1}{d} \int_{S^{d-1}} h^*(K,u)\,S_{d-1}(M,du),
\end{equation}
for $ K,M\in{\cal K}^d$. Here $h^\ast(K,\cdot)$ is the centered support function of $K$ (the support function of the translate of $K$ with Steiner point at the origin) and $S_{d-1}(M,\cdot)$ is a Borel measure on the unit sphere $S^{d-1}$,  the $(d-1)$st area measure of $M$. Moreover, the value of $V(K[1],M[d-1])$ for fixed $K$ and all $M\in{\cal K}^d$ determines $h^*(K,\cdot)$, and for fixed $M$ and all $K\in{\cal K}^d$ it determines $S_{d-1}(M,\cdot)$. Based on these facts, one can show that all mean values $\overline V_{m_1,\dots, m_k}(X,\dots,X)$  in \eqref{Euler} are determined by the higher order mean values of $Z$, as long as the indices $m_i$ are either 1 or $d-1$. This is sufficient in dimensions $d=2$ and $d=3$, but for $d=4$ the formula for the Euler characteristic reads
\begin{align}\label{Euler-4}
\overline V_0(Z) =\e^{-\overline V_4(X)}\Bigl(& \overline V_0(X) -\overline V_{1,3}(X,X) -\frac{1}{2}\overline V_{2,2}(X,X)\\ & + \frac{1}{2}\overline V_{2,3,3}(X,X,X) -\frac{1}{24}\overline V_{3,3,3,3}(X,X,X,X)\Bigr).\nonumber
\end{align}
In \cite[Eq. (20)]{W01} this formula was stated in an incorrect way. Not only were the constants missing, also the term $\overline V_{2,2}(X,X)$ was left out. As was shown there, the mixed expressions $\overline V_{1,3}(X,X)$ and $\overline V_{3,3,3,3}(X,X,X,X)$ are determined by the above-mentioned principle. Moreover, also the term $\overline V_{2,3,3}(X,X,X)$ can be obtained, since it can be expressed as a mixed functional with respect to the Blaschke body $B(X)$ of $X$ (see \cite[Proposition 3]{W01}). However, the missing summand $\overline V_{2,2}(X,X)$ cannot be treated in this way and therefore the proof in \cite{W01} is incomplete.

In fact, as was pointed out in \cite{GW02}, for a corresponding proof in the 4-dimensional case an integral representation similar to \eqref{d-1-case} for the mixed volume $V(K[2],M[2])$ would be necessary. Such a result has been shown now for arbitrary $d$ in \cite{HRW}, namely, for $j\in \{1,\ldots,d-1\}$ we have
\begin{align}\label{j-case}
V&(K[j],M[d-j]) \\
&= \int_{F(d,j+1)} \int_{F(d,d-j+1)} f_j(u,L,u',L')\,\psi_{d-j}(M,d(u',L'))\,\psi_j(K,d(u,L))\nonumber
\end{align}
for all $K,M\in {\cal K}^d$ which are in general relative position. Here $\psi_j(K,\cdot)$ denotes the $j$th {\it flag measure} of $K$, a finite non-negative Borel measure on the space $F(d,j+1)$ of all flags $(u,L)$, where $L$ is an element of the linear Grassmannian $G(d,j+1)$ and $u$ is a unit vector in $L$ (and similarly $\psi_{d-j}(M,\cdot)$ is the corresponding flag measure of $M$ on $F(d,d-j+1)$). The function $f_j$ on $F(d,j+1)\times F(d,d-j+1)$ is a signed measurable function independent of $K$ and $M$. Recently \cite{HRW2}, this result was extended to flag representations of general mixed volumes $V(K_1[n_1],\dots ,K_k[n_k])$ of bodies $K_i\in{\cal K}^d$ in general relative position with natural numbers $n_i$ such that $n_1+\ldots +n_k=d$ and also to flag formulas for mixed translative functionals $V_{m_1,\ldots, m_k}(K_1,\dots ,K_k)$, where $m_1+\ldots +m_k= (k-1)d$.

Using these flag representations and related approximation results (used to avoid any assumption on the relative
position of the involved convex bodies), we can now complete the proof of the 4-dimensional situation, but we will also obtain the following general result.

\begin{theorem}\label{1} Let $Z$ be a stationary Boolean model in $\R^d, d\ge 2,$ with convex grains and
satisfying \eqref{moment}. If for $j=0,\dots ,d$ and all $K\in{\cal K}^d$  the densities of the mixed volumes $\overline V(Z[j], K[d-j])$ are given, then the intensity $\gamma$ of the underlying Poisson particle process $X$ and the mean flag measures
$$
\int_{{\cal K}^d_0} \psi_j(K,\cdot)\, {\mathbb Q} (dK),\quad j=1,\dots ,d-1,
$$
of the particles are determined.
\end{theorem}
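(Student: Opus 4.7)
I proceed by downward induction on $j$, from $j=d$ down to $j=0$, using the mean value formula \eqref{nonisotropic}. At step $j=d$ the identity $\overline V(Z[d],K[0]) = \overline V_d(Z) = 1-\e^{-\overline V_d(X)}$ yields $\overline V_d(X)$ and hence the prefactor $\e^{-\overline V_d(X)}$ that appears in every remaining case. At steps $j=d-1,\ldots,1$ I will recover the mean flag measure
\[
\overline\psi_j(X,\cdot) := \int_{{\cal K}^d_0}\psi_j(K,\cdot)\,{\mathbb Q}(dK),
\]
and at $j=0$ the intensity $\gamma$.

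\textbf{Inductive step for $1\le j\le d-1$.} Assume $\overline\psi_m(X,\cdot)$ is known for every $m\in\{j+1,\ldots,d-1\}$. Split the right-hand side of \eqref{nonisotropic} into the $|\mathbf m|=1$ term, which equals $\overline V_{j,d-j}(X,K^*) = \binom{d}{j}\overline V(X[j],K[d-j])$, and the contributions with $|\mathbf m|\ge 2$. In each of the latter the indices $m_1,\ldots,m_k$ lie in $\{j+1,\ldots,d-1\}$; invoking the flag representation of mixed translative functionals from \cite{HRW2}, together with the approximation argument used there to remove the general relative position hypothesis, each mean density $\overline V_{\mathbf m,d-j}(X,\ldots,X,K^*)$ is expressed as an integral of a universal kernel against the inductively known mean flag measures $\overline\psi_{m_i}(X,\cdot)$ and the explicit measure $\psi_{d-j}(K^*,\cdot)$. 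These higher-order contributions are therefore computable for every $K\in{\cal K}^d$, and rearranging \eqref{nonisotropic} isolates the function $K\mapsto \overline V(X[j],K[d-j])$.

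\textbf{From mixed volumes to the $j$-th flag measure.} Taking the ${\mathbb Q}$-expectation in \eqref{j-case}, and again approximating to drop the general-position assumption, gives for all $K\in{\cal K}^d$
\[
\overline V(X[j],K[d-j]) = \int_{F(d,j+1)}\int_{F(d,d-j+1)} f_j(u,L,u',L')\,\psi_{d-j}(K,d(u',L'))\,\overline\psi_j(X,d(u,L)).
\]
The problem is to show that this linear map is injective in $\overline\psi_j(X,\cdot)$, i.e.\ that the test functions $(u,L)\mapsto \int f_j(u,L,u',L')\,\psi_{d-j}(K,d(u',L'))$ separate flag measures on $F(d,j+1)$ as $K$ varies over ${\cal K}^d$. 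I expect this to be the decisive technical obstacle. The map $K\mapsto \overline V(X[j],K[d-j])$ is a translation invariant continuous valuation of degree $d-j$, and Alesker's irreducibility theorem supplies enough valuations of this form (equivalently, a sufficiently rich family of flag measures $\psi_{d-j}(K,\cdot)$ when paired with the kernel $f_j$) to pin down $\overline\psi_j(X,\cdot)$ uniquely.

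\textbf{Completion at $j=0$.} With $\overline\psi_1(X,\cdot),\ldots,\overline\psi_{d-1}(X,\cdot)$ now determined, specialize \eqref{nonisotropic} at $j=0$, which after division by $V_d(K)$ is the Euler identity \eqref{Euler}. Every summand $\overline V_{m_1,\ldots,m_k}(X,\ldots,X)$ on its right-hand side involves indices in $\{1,\ldots,d-1\}$ only and, by the flag formula of \cite{HRW2} for mixed translative functionals, equals an explicit integral against the known mean flag measures. Since $\overline V_0(Z)$ is given (from $\overline V(Z[0],K[d])/V_d(K)$) and $\e^{-\overline V_d(X)}$ was obtained at the base step, \eqref{Euler} solves for $\gamma=\overline V_0(X)$, completing the proof.
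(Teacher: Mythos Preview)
Your overall architecture matches the paper's: downward recursion through \eqref{nonisotropic}, using the flag representation from \cite{HRW2} (with the $\varepsilon$-truncation to remove the general-position hypothesis) to show that the $|\mathbf m|\ge 2$ terms are determined by the already-known mean flag measures, thereby isolating $\overline V(X[j],K[d-j])$; and finally reading off $\gamma$ from \eqref{Euler}. Up to and including that isolation step, and at $j=0$, your argument is the paper's argument.

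The one place where you diverge from the paper is the passage \emph{from} $\overline V(X[j],K[d-j])$ (for all $K$) \emph{to} $\overline\psi_j(X,\cdot)$, and here there is a real gap. You write the $\mathbb Q$-averaged version of \eqref{j-case} and then try to show that the induced test functions separate measures on $F(d,j+1)$. Two problems arise. First, the displayed identity is not justified: the kernel $f_j$ is unbounded, and the approximation result only gives $\overline V^{(\varepsilon)}(X[j],K[d-j])\nearrow \overline V(X[j],K[d-j])$ with a \emph{truncated} kernel $f_j^{(\varepsilon)}$; one cannot simply pass to the untruncated $f_j$ after integrating against $\mathbb Q$. Second, Alesker's theorem does not assert that the functions $(u,L)\mapsto\int f_j(u,L,u',L')\,\psi_{d-j}(K,d(u',L'))$ are dense in $C(F(d,j+1))$; it asserts that valuations of the form $M\mapsto V(M[j],K[d-j])$ are dense in ${\bf Val}_j$. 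These are different statements, and your injectivity claim does not follow from the latter.

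The paper avoids both issues by applying Alesker in the other direction and bypassing \eqref{j-case} entirely at this step. For each continuous $g$ on $F(d,j+1)$, the functional $\varphi_g(M)=\int g\,d\psi_j(M,\cdot)$ lies in ${\bf Val}_j$, so by Alesker there exist $\lambda_i,K_i$ with $|\varphi_g(M)-\sum_i\lambda_i V(M[j],K_i[d-j])|\le c\,\varepsilon\,V_1(M)^j$ for all $M$. Integrating this bound against $\gamma\,\mathbb Q$ and invoking the moment hypothesis \eqref{moment} (which you do not mention, but which is essential precisely here) yields $\overline\varphi_g(X)=\int g\,d\overline\psi_j(X,\cdot)$ as a limit of known linear combinations of $\overline V(X[j],K_i[d-j])$. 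Since $g$ is arbitrary, $\overline\psi_j(X,\cdot)$ is determined. Replace your injectivity paragraph by this argument and your proof goes through.
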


Another essential ingredient of the proof of the theorem (necessary already in the 4-dimensional case) is a deep result from valuation theory, namely Alesker's confirmation (see \cite{A1}) of a conjecture of McMullen. This result states (in generalized form) that every translation invariant continuous valuation $\varphi$ on ${\cal K}^d$ which is homogeneous of degree $j\in\{1,\dots ,d-1\}$ can be approximated by special linear combinations of mixed volumes, that is, for each $n\in\N$ there exist $\lambda^n_i\in\R$ and $K_i^n\in {\cal K}^d$,  $i\in\{1,\ldots,n\}$, such that
$$
\varphi (M) = \lim_{n\to\infty} \sum_{i=1}^n \lambda_i^{n} V(M[j],K_i^n[d-j]) ,
$$
for all $ M\in{\cal K}^d$. In addition, the convergence is uniform for convex bodies $M$ contained in the unit ball $B^d$ (see \cite[p.~245--247]{A1}).

In the next section, we collect some background information on the notions and results from convex geometry, valuation theory and stochastic geometry which are used later on. In Section 3, we complete the proof in the 4-dimensional situation as a motivation for the proof of Theorem \ref{1}, which will be given in Section 4. In Section 5 we collect some remarks on extensions and applications of the main result. In particular, we discuss which shape information can be obtained in the non-isotropic case. In the final Section 6, we consider the situation for non-stationary Boolean models where a corresponding uniqueness result remains open for $d\ge 4$. For dimensions $d=2$ and $d=3$, we explain conditions under which a positive answer can be given, modifying thus corresponding statements in \cite{W00} and \cite{GW02}. We also use the opportunity, in Sections 3 and 6, to correct some further misprints in \cite{W01}.

\section{Background information}
We work in Euclidean space $\R^d$, with scalar product $\langle\cdot\,,\cdot\rangle$ and induced norm $\|\cdot\|$. Let
$B^d$ denote the unit ball and $S^{d-1}$ its boundary, the unit sphere. A convex body $K\subset\R^d$ is
a nonempty compact convex set, $\mathcal{K}^d$ denotes the space of convex bodies together with the Hausdorff metric.
We write $\mathcal{K}^d_0$ for the set of all $K\in \mathcal{K}^d$ whose Steiner point is the origin; see \cite{S} and \cite[Chapter 14]{SW} for further details. On $\mathcal{K}^d$ various geometric functionals are defined. The intrinsic
volumes $V_j:\mathcal{K}^d\to[0,\infty)$, for $j=0,\ldots,d$, are positively homogeneous of degree $j$, motion invariant, continuous and additive functionals, which can be introduced via the Steiner formula for the volume $V_d(K+\lambda B^d)$ of the parallel body $K+\lambda B^d, \lambda \ge 0, K\in{\cal K}^d$. Since additivity (the valuation property) is a fundamental property which is crucial for many relations used in the following, we recall that a functional
$\varphi: \mathcal{K}^d\to\R$ is additive (a valuation) if
$$
\varphi(K\cup M)+\varphi(K\cap M)=\varphi(K)+\varphi(M)
$$
for all $K,M\in \mathcal{K}^d$ for which $K\cup M\in \mathcal{K}^d$. The intrinsic volumes can also be viewed as particular examples of mixed volumes of convex bodies. Mixed volumes are continuous functionals $V:(\mathcal{K}^d)^d\to[0,\infty)$ which arise as coefficients in an expansion
$$
V_d\left(\sum_{i=1}^m\lambda_iK_i\right)=\sum_{i_1,\ldots,i_d=1}^m\lambda_{i_1}\cdots\lambda_{i_d}V(K_{i_1},\ldots,K_{i_d})
$$
for $\lambda_i\ge 0$ and $K_i\in\mathcal{K}^d$, $i=1,\ldots,m$. The mixed volume is symmetric in its $d$ arguments and hence uniquely determined by this expansion. Therefore, writing briefly $V(K[j],M[d-j])$ for $V(K,\ldots,K,M,\ldots,M)$ with $j$ copies of $K$ and $d-j$ copies of $M$, we obtain
$$
V_d(K+\lambda M)=\sum_{j=0}^d\lambda^{d-j}\binom{d}{j}V(K[j],M[d-j])
$$
for $\lambda\ge 0$ and $K,M\in\mathcal{K}^d$. In the special case $j=1$ (and similarly for $j=d-1$), as pointed out before
we have the integral representation
$$
V(K[1],M[d-1])=\frac{1}{d}\int_{S^{d-1}}h(K,u)\, S_{d-1}(M,du),
$$
where $h(K,\cdot)$ is the support function of $K$ (considered as a function on $S^{d-1}$) and $S_{d-1}(M,\cdot)$
is a finite Borel measure on the unit sphere (the top order area measure of $M$). Since $S_{d-1}(M,\cdot)$
is centred (that is to say, the centre of the mass distribution is the origin), the integrand is only determined up to a linear function, that is, up to a translate of $K$. Hence this translate can be chosen such that the Steiner point is the origin (see
\eqref{d-1-case}).

An important result  for area measures is Minkowski's existence and uniqueness result, which states that, for any finite,
centred and non-degenerate (not concentrated on a great subsphere) Borel measure $\mu$ on the unit sphere, there is up to a translation a unique convex body
$B_\mu$ such that $\mu=S_{d-1}(B_\mu,\cdot)$.

It is well known that the area measure $S_{d-1}(K,\cdot)$ of a convex body $K\in\mathcal{K}^d$ is one member of the family of area measures $S_j(K,\cdot)$, $j=0,\ldots,d-1$,
which are all finite, centred Borel measures on $S^{d-1}$ and can be obtained via a {\em local} Steiner formula. In fact, they are all special instances of the mixed area measures (see \cite[Section 14.3, p.~611]{SW}). Here, however, we need another extension of the area measures that has recently been studied more intensively. First, we use the renormalization $\Psi_{d-1}(K,\cdot):=\frac{1}{2}S_{d-1}(K,\cdot)$ whose total measure equals $V_{d-1}(K)$. For $k\in\{1,\ldots,d\}$, we then consider the flag space
$$F(d,k):=\{(u,U)\in S^{d-1}\times G(d,k):u\in U\}$$
and on the Borel sets of $F(d,j+1)$, for $j=0,\ldots,d-1$, the flag measure
\begin{equation}\label{flag}
\psi_j(K,\cdot):=\int_{G(d,j+1)}\int_{S^{d-1}\cap U}\mathbf{1}\{(u,U)\in\cdot\}\, \Psi_j^U(K|U,du)\, \nu_{j+1}(dU),
\end{equation}
where $\Psi_j^U(K|U,\cdot)$ is the renormalized area measure of the projection of $K$ to $U$,
with respect to the subspace $U$, and $\nu_{j+1}$ is the rotation invariant Haar probability measure on $G(d,j+1)$.
In particular, the map $K\mapsto \psi_j(K,\cdot)$ is additive and positively homogeneous of degree $j$.
We refer to \cite{HTW} and \cite{Weil17b} for background information on flag measures of convex bodies (in particular, for different normalizations and isomorphic versions), and to
\cite{HRW, HRW2} for integral representations of mixed volumes and mixed functionals with respect to flag measures as needed here
and described in the following sections.

In addition to the mixed volumes, we need certain mixed functionals of translative integral geometry and their
relation to mixed volumes. The mixed functionals we will make use of are determined by the
iterated translative integral formula for the intrisic volumes, that is,
\begin{align*}
&\int_{\R^d}\dots\int_{\R^d}V_j(K_1\cap (K_2+x_2)\cap\ldots\cap (K_k+x_k))\, \lambda_d(dx_2)\cdots \lambda_d(dx_k)\\
&\qquad =\sum_{\substack{m_1,\ldots,m_k=j\\m_1+\ldots+m_k=(k-1)d+j}}^{d}V_{m_1,\ldots,m_k}(K_1,\ldots,K_k),
\end{align*}
for $j\in \{0,\ldots,d\}$, $k\ge 2$ and $K_1,\ldots,K_k\in\mathcal{K}^d$. Here $\lambda_d$ denotes $d$-dimensional Lebesgue measure.
We refer to \cite{S, SW} and to \cite{SchuW} for properties of the mixed functionals $V_{m_1,\ldots,m_k}$ and
various (local and abstract) extensions. As mentioned in the Introduction, mixed volumes and the mixed functionals
of translative integral geometry are different objects, but they are closely related for $k=2$ and $j=0$ where
we have
\begin{equation}\label{eqrel}
\binom{d}{m}V(K[m],M[{d-m}])=V_{m,d-m}(K,M^*).
\end{equation}
The mixed functionals again satisfy translative integral geometric formulas. For instance, we have
\begin{align*}
&\int_{\R^d}V_{m_1,\ldots,m_{k-2},m}(K_1,\ldots,K_{k-2},K_{k-1}\cap (K_k+x))\,\lambda_d(dx)\\
&\qquad=\sum_{\substack{m_{k-1},m_k=m\\m_{k-1}+m_k=d+m}}^{d}
V_{m_1,\ldots,m_{k-2},m_{k-1},m_k}(K_1,\ldots,K_{k-2},K_{k-1},K_k),
\end{align*}
for $k\ge 3$ and $m_1+\ldots+m_{k-2}+m=(k-2)d+j$.

These functionals on convex bodies and their additive extensions to polyconvex sets are used
in the study of Boolean models.
 In the Introduction, a stationary Boolean model was defined as the union set of a stationary
Poisson particle process $X$ in $\mathcal{K}^d$. An alternative description in terms of an
independently marked Poisson point process is given in \cite{SW} (see also the extensive literature cited there).
Writing  $\mathbb{E}$ for expectation with respect to an underlying probability measure $\mathbb{P}$, we
assume in the following
that the intensity measure $\mathbb{E} X$ of $X$ is locally finite and non-zero, and hence by stationarity has the form
$$
\mathbb{E} X(\cdot)=\gamma\int_{\mathcal{K}^d_0}\int_{\R^d}\mathbf{1}\{K+x\in\cdot\}\,\lambda_d(dx)\,\mathbb{Q}(dK),
$$
where $\gamma\in (0,\infty)$ is the intensity of $X$ and $\mathbb{Q}$ is a probability measure on the space of
convex bodies, concentrated on bodies with Steiner point at the origin. The assumption of local finiteness of
the intensity measure is equivalent to
\begin{equation}\label{locfinBM}
\int_{\mathcal{K}^d_0}V_j(K)\, \mathbb{Q}(dK)<\infty,\qquad j=1,\ldots,d,
\end{equation}
and ensures that indeed $Z$ is a random closed set. This assumption is always tacitly made. For our analysis, we
will need the additional  moment assumption
\begin{equation}\label{moment}
\int_{\mathcal{K}^d_0}V_1(K)^{d-2}\, \mathbb{Q}(dK)<\infty ,
\end{equation}
which implies \eqref{locfinBM} for $j=1,\ldots,d-2$, by a special case of \cite[(14.31)]{SW}
and H\"older's inequality. In particular, all moments with exponent $j<d-2$ are finite as well.
If $\varphi$ is a measurable, nonnegative functional on $(\mathcal{K}^d)^{q+r}$ which is
translation invariant in each argument, we define
the mean values (densities) of $\varphi$ for the particle process $X$ by
\begin{align*}
\overline\varphi(X,\ldots,X,M_1,&\ldots ,M_r)\\
&:=\gamma^q\int_{(\mathcal{K}^d_0)^{q}}\varphi(K_1,\ldots,K_q,M_1,\ldots, M_r)\,
\mathbb{Q}^q(d(K_1,\ldots,K_q)),
\end{align*}
for all $M_1,\ldots , M_r\in\mathcal{K}^d$.  Here we allow $r=0$ which gives us $\overline\varphi(X,\ldots,X)$. Thus, in particular, the mean values $\overline V_j(X)$, $\overline V_{\mathbf{m}}
(X,\ldots,X)$ and  $\overline V_{\mathbf{m},d-j}(X,\ldots,X,M)$ from the Introduction are defined and finite,
as a consequence of \eqref{locfinBM} and the translative integral formula. In addition to scalar valued
mean values, the mean values of support functions and area measures are also required. They are defined in the obvious
way by
$$
\overline{h}(X,\cdot):=\gamma\int_{\mathcal{K}^d_0} h(K,\cdot)\, \mathbb{Q}(dK)
$$
and
$$
\overline{S}_{d-1}(X,\cdot):=\gamma\int_{\mathcal{K}^d_0} S_{d-1}(K,\cdot)\, \mathbb{Q}(dK).
$$
It follows from \eqref{locfinBM} that $\overline{h}(X,\cdot)$ is a finite function, which is again a support function of a convex body $M(X)\in {\cal K}^d_0$, the mean body of $X$,
and $\overline{S}_{d-1}(X,\cdot)$ is again a finite, centred Borel measure on the unit sphere. If it is non-degenerate,
then it is the area measure of a convex body $B(X)\in {\cal K}^d_0$, the Blaschke body of $X$,
$$
\overline{S}_{d-1}(X,\cdot)=S_{d-1}(B(X),\cdot).
$$
It is important to keep in mind, that these mean values and $B(X)$ are deterministic objects and $X$ simply reminds of
the dependence on $X$, that is, on $\gamma$ and $\mathbb{Q}$, which are uniquely determined by $X$.

The Blaschke body $B(X)$ of $X$ exists, if the measure $\overline{S}_{d-1}(X,\cdot)$ is not supported by a subsphere of $S^{d-1}$. This means that the distribution ${\mathbb Q}$ is not concentrated on convex bodies which all lie in parallel hyperplanes. We can and will assume this without loss of generality, since in a Boolean model with particles lying in parallel hyperplanes almost surely no overlaps occur. Thus, in this case the intensity of $X$ can be estimated directly since the Boolean model $Z$ and the particle process $X$ contain the same information. In particular, Theorem \ref{1} is trivially true in this case.

Under the given assumptions, the additivity properties of the functionals $V_j$ and $V(\cdot \, [j],K[d-j])$
ensure that also the deterministic densities $\overline V_j(Z)$ and $\overline V(Z [j],K[d-j])$ of the Boolean model $Z$ can be defined. One possibility would be to use the relations to the mean values of $X$, as they were described in the Introduction, relations which are specific for the current setting that is based on an underlying Poisson process. However, such densities can be defined for functionals $\varphi$ of more general random sets $Z$, either through a limit for a sequence of growing windows (which requires additivity of $\varphi$) or as a Radon-Nikodym derivative, which is based on local versions of the functionals $\varphi$. We refer to \cite[Chapter 9]{SW}, for details.

\section{The 4-dimensional case}

We complete now the proof of the case $d=4$ of Theorem \ref{1}. It motivates the ideas for the general case in the next section, but is simpler and therefore shorter due to the structure of the mixed mean values which occur. We  first recall the density formulas for the mixed volumes in four dimensions. They read
\begin{align}
\overline V_4(Z) &=1-\e^{-\overline V_4(X)},\label{4-4}\\
\overline V(Z[3],K[1]) &=\e^{-\overline V_4(X)}\overline V(X[3],K[1]),\label{4-3}\\
\overline V(Z[2],K[2]) &=\e^{-\overline V_4(X)}\Bigl(\overline V(X[2],K[2]) -\frac{1}{12}\overline V_{3,3,2}(X,X,K^*)\Bigr),\label{4-2}\\
\overline V(Z[1],K[3]) &=\e^{-\overline V_4(X)}\Bigl(\overline V(X[1],K[3]) -\frac{1}{4}\overline V_{2,3,3}(X,X,K^*)\nonumber\\
&\quad\quad  + \frac{1}{24}\overline V_{3,3,3,3}(X,X,X,K^*)\Bigr),\label{4-1}\\
\overline V_0(Z) &=\e^{-\overline V_4(X)}\Bigl( \overline V_0(X) -\overline V_{1,3}(X,X) -\frac{1}{2}\overline V_{2,2}(X,X)\nonumber\\ &\quad\quad  + \frac{1}{2}\overline V_{2,3,3}(X,X,X) -\frac{1}{24}\overline V_{3,3,3,3}(X,X,X,X)\Bigr).\label{4-0}
\end{align}
This is the corrected version of \cite[(20)]{W01}, both with respect to the missing term $\overline V_{2,2}(X,X)$ and to erroneous constants.
We assume that all five mean values on the left are known for all $K$. Clearly, \eqref{4-4} determines the factor $\e^{-\overline V_4(X)}$ in all other equations. From \eqref{d-1-case} we thus obtain that \eqref{4-3} determines the mean area measure $\overline S_3(X,\cdot)$ and thus the Blaschke body $B(X)$. In \cite[Proposition 3]{W01}, it was shown that in all mean values, where the homogeneity index 3 occurs, the corresponding variable $X$ can be replaced by $B(X)$. For example,
$$\overline V_{3,3,2}(X,X,K^*) =  V_{3,3,2}(B(X),B(X),K^*)$$ which is determined, since $B(X)$ is known. Thus \eqref{4-2} yields $$\overline V(X[2],K[2]) = \tfrac{1}{6}\overline V_{2,2}(X,K^*)$$ for all $K$. Using the translative integral formula for the variable $K$, we get the mean mixed functional $\overline V_{2,3,3}(X,K,M)$ for all convex bodies $K,M$.
In fact, for all convex bodies $L,K,M\in\mathcal{K}^4_0$ and $\alpha,\beta\ge 0$ we have
\begin{align*}
&\int_{\R^4} V_{2,2}(L,(\alpha K)\cap (\beta M+x))\, \lambda_d(dx)\\
&\quad=\alpha^2\beta^4 V_{2,2}(L,K)V_4(M)+\alpha^4\beta^2 V_{2,2}(L,M)V_4(K)
+\alpha^3\beta^3V_{2,3,3}(L,K,M),
\end{align*}
and hence
\begin{align*}
&\int_{\R^4}  \overline{V}_{2,2}(X,(\alpha K)\cap (\beta M+x))\, \lambda_d(dx)\\
&\quad=\alpha^2\beta^4 \overline{V}_{2,2}(X,K)V_4(M) +
\alpha^4\beta^2 \overline{V}_{2,2}(X,M)V_4(K)
+\alpha^3\beta^3\overline{V}_{2,3,3}(X,K,M).
\end{align*}
In particular, we obtain the quantities
$$\overline V_{2,3,3}(X,B(X),K^*)= \overline V_{2,3,3}(X,X,K^*)$$
 in \eqref{4-1} and
$$\overline V_{2,3,3}(X,B(X),B(X))= V_{2,3,3}(X,X,X)$$
in \eqref{4-0}.
Thus, from \eqref{4-1}, $\overline V(X[1],K[3])$ is determined for all $K$. Again by \eqref{d-1-case}, this gives us $\overline h(X,\cdot)$  and hence
$$
\overline V_{1,3}(X,X) = \frac{1}{4}\int_{S^{d-1}} \overline h(X,u)\,\overline S_3(X,du) .
$$
Therefore, the only summand in \eqref{4-0} which remains to be discussed (apart from $\gamma =  \overline V_0(X)$) is $\overline V_{2,2}(X,X)$. As we will show, this mean value is determined by the knowledge of $\overline V(X[2],K[2]) $ for all $K$, which we have already obtained.

For a continuous function $g$ on the flag space $F(4,3)$, we consider the functional
$$
\varphi_g (M) = \int_{F(4,3)} g(u,L)\,\psi_2(M,d(u,L)),\quad M\in {\cal K}^d,$$
and its mean value
$$
\overline\varphi_g (X) = \gamma\int_{{\cal K}^d_0} \varphi_g (M)\, {\mathbb Q}(dM) =
\int_{F(4,3)} g(u,L)\, \overline\psi_2(X,d(u,L)).
$$
Obviously, $\varphi_g$ is an element of ${\bf Val}_2^{(4)}$, the vector space of translation invariant, continuous
valuations on $\R^4$ which are positively homogeneous of degree $2$.
By Alesker's result \cite{A1}, $\varphi_g$ can be approximated by linear combinations of special mixed volumes, that
is, for each $\varepsilon>0$ there are $n\in\N$, $\lambda_1,\ldots,\lambda_n\in\R$ and $K_1,\ldots,K_n\in\mathcal{K}^4_0$
such that
$$
|\varphi_g(M)-\sum_{i=1}^n\lambda_i V(M[2], K_i[2])|\le\varepsilon R(M)^2\le \varepsilon \cdot c_1 V_1(M)^2,
$$
for all $M\in\mathcal{K}^d$,
where $R(M)$ denotes the circumradius of $M$ and $V_1(M)\ge c_2\text{diam}(M)\ge c_2 R(M)$,
with some positive constant $c_1,c_2$, by the monotonicity of the intrinsic volume $V_1$.
Hence
$$
|\overline \varphi_g(X)-\sum_{i=1}^n\lambda_i \overline V(X[2], K_i[2])|\le \varepsilon \cdot c_3,
$$
with some constant $c_3>0$,
due to the assumption \eqref{moment} for $d=4$.
This shows that $\overline \varphi_g(X)$ is determined by $\overline V(X[2], K [2])$, for all $K\in\mathcal{K}^4_0$.
Notice that the mean values $\overline \varphi_g(X)$ exist, since
$$
 \int_{F(4,3)} |g(u,L)|\,\overline\psi_2(X,d(u,L))  \le \|g\| \overline V_2(X)<\infty .
$$
Thus we obtain $\int_{F(4,3)} g(u,L)\, \overline\psi_2(X,d(u,L))$ for all continuous functions $g$ and therefore $\overline\psi_2(X,\cdot)$.
But then also $\overline\psi_2(X^*,\cdot)$ is determined.

Let again $\varepsilon >0$ be arbitrary. In \cite{HRW}, an $\varepsilon$-approximation $V^{(\varepsilon)}(M[2],K^*[2])\ge 0$
of the mixed volume $V (M[2],K^*[2])$, for any $M,K\in\mathcal{K}^4_0$, has been introduced such that

$$
V^{(\varepsilon)}(M[2],K^*[2])\nearrow V (M[2],K^*[2]) ,
$$
as $\varepsilon\searrow 0$.
Moreover, it was shown that there exist bounded, measurable functions $f_2^{(\varepsilon)}$ on $F(4,3)\times F(4,3)$
such that
\begin{align*}
&V^{(\varepsilon)}(M[2],K^*[2])\\
&\qquad =\int_{F(4,3)}\int_{F(4,3)} f_2^{(\varepsilon)}
(u,L,u',L')\,\psi_2(K^*,d(u',L'))\,\psi_2(M,d(u,L)).
\end{align*}
Integration against $\gamma^2\mathbb{Q}^2$ then yields that
\begin{align}
&\int_{F(4,3)}\int_{F(4,3)} f_2^{(\varepsilon)}
(u,L,u',L')\,\overline\psi_2(X^*,d(u',L'))\,\overline\psi_2(X,d(u,L))\nonumber\\
&=\gamma^2 \int_{\mathcal{K}^4_0}\int_{\mathcal{K}^4_0}V^{(\varepsilon)}(M[2],K^*[2])\, \mathbb{Q}(dK)\, \mathbb{Q}(dM)\nonumber\\
&\nearrow \gamma^2 \int_{\mathcal{K}^4_0}\int_{\mathcal{K}^4_0}V (M[2],K^*[2])\, \mathbb{Q}(dK)\, \mathbb{Q}(dM)\nonumber\\
&=\overline V (X[2],X^*[2]),\label{intmean}
\end{align}
as $\varepsilon\searrow 0$. Since $\overline\psi_2(X,\cdot)$ and $\overline\psi_2(X^*,\cdot)$ are determined, we conclude that $6\overline V(X[2],X^*[2])=V_{2,2}(X,X)$ is determined as well. This completes the proof of the 4-dimensional case of Theorem \ref{1} and replaces the incomplete proof in \cite{W01}.

\section{Proof of the general case}

For general dimension $d\ge 5$, more complicated mixed functionals and their mean values occur, which cannot be expressed by the Blaschke body and the mean body anymore. Here, we need the recent extension \cite{HRW2} of the flag formulas. Moreover, the truncation argument which we used already in the previous section is more subtle and will be explained in detail.

To start with the proof of Theorem \ref{1}, we use again that $ \overline V_d(Z)$ and the equation
$$  \overline V_d(Z) = 1-\e^{-\overline V_d(X)}$$
determine the factor
$$ q= \e^{-\overline V_d(X)}$$
in the remaining equations. Thus
$$
 \overline V(Z [d-1], K[1]) = q\,\overline V(X [d-1],K [1])
$$
determines $\overline V(X [d-1],K [1])$, for all $K\in{\cal K}^d$. From \eqref{d-1-case} we get that this determines the mean area measure $\overline S_{d-1}(X,\cdot)$. Up to a constant, this is the mean flag measure $\overline \psi_{d-1}(X,\cdot)$ (if we identify $(u,\R^d)$ and $u$). We now proceed by recursion.

\begin{prop}\label{prop} Let $j\in\{ 1,\dots ,d-2\}$ and assume that $q$ and the mean flag measures $\overline\psi_{d-1}(X,\cdot),\dots,\overline\psi_{j+1}(X,\cdot) $ are given. Then the values of the densities $\overline V(Z [j], K[d-j] )$, for all $K\in{\cal K}^d$,  determine the mean flag measure $\overline \psi_j(X,\cdot)$.
\end{prop}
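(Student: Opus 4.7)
The plan is to adapt the strategy from the four-dimensional case in Section 3 to serve as the inductive step in $j$, descending from $j=d-1$ (already handled) down to $j=1$. Starting from the general expansion \eqref{nonisotropic} for the given $j$, I would isolate on the right-hand side the singleton contribution $\mathbf{m}=(j)$, which by \eqref{eqrel} equals $\binom{d}{j}\overline V(X[j],K[d-j])$, and separate it from the remaining terms with $|\mathbf{m}|\geq 2$. For every such $\mathbf{m}=(m_1,\ldots,m_k)$, each entry $m_i$ belongs to $\{j+1,\ldots,d-1\}$, so that the mean flag measures $\overline\psi_{m_i}(X,\cdot)$ are all available by the inductive hypothesis.

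Next, for each $\mathbf{m}$ with $k=|\mathbf{m}|\ge 2$, I would invoke the flag representation of mixed translative functionals from \cite{HRW2} together with the $\varepsilon$-approximation of \cite{HRW} already used in Section 3. This yields bounded measurable functions $f^{(\varepsilon)}$ on the appropriate flag product space such that
$$V^{(\varepsilon)}_{\mathbf{m},d-j}(K_1,\ldots,K_k,K^*)\nearrow V_{\mathbf{m},d-j}(K_1,\ldots,K_k,K^*)\quad(\varepsilon\searrow 0),$$
with $V^{(\varepsilon)}_{\mathbf{m},d-j}$ expressed as an integral of $f^{(\varepsilon)}$ against the product measure $\psi_{m_1}(K_1,\cdot)\otimes\cdots\otimes\psi_{m_k}(K_k,\cdot)\otimes\psi_{d-j}(K^*,\cdot)$. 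Integration against $\gamma^k\mathbb{Q}^k$ combined with monotone convergence (licensed by \eqref{moment} and by the finiteness of each inductively known $\overline\psi_{m_i}(X,\cdot)$) shows that $\overline V_{\mathbf{m},d-j}(X,\ldots,X,K^*)$ is determined by the induction data and $\psi_{d-j}(K^*,\cdot)$, hence is known for every $K\in{\cal K}^d$. Subtracting these contributions isolates $\overline V(X[j],K[d-j])$ for all $K$.

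The final step extracts $\overline\psi_j(X,\cdot)$ from this information by the Alesker-based argument of Section 3. For a continuous function $g$ on $F(d,j+1)$, the functional
$$\varphi_g(M):=\int_{F(d,j+1)}g(u,L)\,\psi_j(M,d(u,L)),\qquad M\in{\cal K}^d,$$
is a translation invariant continuous valuation, positively homogeneous of degree $j$, and so lies in $\Val_j^{(d)}$. By Alesker's theorem, $\varphi_g$ is approximated by linear combinations $\sum_{i=1}^n\lambda_i V(\cdot\,[j],K_i[d-j])$ with $K_i\in{\cal K}^d_0$, uniformly on the unit ball, with error on a general $M$ dominated by $\varepsilon\,R(M)^j\le c\,\varepsilon\,V_1(M)^j$. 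Because $j\le d-2$, the moment condition \eqref{moment} together with H\"older's inequality guarantees $\mathbb{Q}$-integrability of $V_1(\cdot)^j$, so passing to means gives $\overline\varphi_g(X)=\int g\,d\overline\psi_j(X,\cdot)$ as a limit of quantities already determined in the previous step. Letting $g$ vary over all continuous functions on $F(d,j+1)$ then pins down $\overline\psi_j(X,\cdot)$.

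The main obstacle is the second step. The flag representations of \cite{HRW2} for mixed translative functionals require general relative position of the bodies, a condition that cannot be imposed under the stochastic averaging; the monotone $\varepsilon$-approximation is designed to bypass this restriction, but one must verify that the monotone limit in $\varepsilon$ commutes with the $k$-fold integration against $\gamma^k\mathbb{Q}^k$. The integrability provided by \eqref{moment}, together with the inductive finiteness of all mean flag measures of order strictly greater than $j$, is precisely what legitimizes this interchange and closes the recursion.
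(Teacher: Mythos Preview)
Your proposal is correct and follows essentially the same route as the paper's proof: use the flag representation and $\varepsilon$-truncation of the mixed translative functionals to determine each $\overline V_{\mathbf m,d-j}(X,\ldots,X,K^*)$ from the known higher-order mean flag measures, isolate $\overline V(X[j],K[d-j])$, and then apply Alesker's approximation with the moment bound \eqref{moment} to recover $\overline\psi_j(X,\cdot)$. One small point of attribution: the $\varepsilon$-approximation for the general mixed functionals $V_{\mathbf m}^{(\varepsilon)}$ (not just for mixed volumes of two bodies) is established in \cite[Section~6]{HRW2} rather than in \cite{HRW}, which only covers the case $k=1$.
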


\begin{proof} We recall  formula \eqref{nonisotropic} in more explicit form
\begin{align}\label{nonisotropic2}
  \binom{d}{j}\overline V(Z [j],& K[d-j] ) = q \Biggl(\binom{d}{j}\overline V(X [j], K[d-j] )\\
  &- \sum_{k=2}^{d-j} \frac{(-1)^k}{k!} \sum_{\substack{m_1,\ldots ,m_k=j+1\\ m_1+\ldots + m_k=(k-1)d+j}}^{d-1}
  \overline V_{m_1,\dots,m_k,d-j}(X,\dots,X, K^*)\Biggr).\nonumber
\end{align}

In Hug-Rataj-Weil \cite[Section 6]{HRW2}, the formula
\begin{align}\label{genflagform}
&V_{\bf m}(K_1,\dots ,K_k,K^*)\nonumber\\
&\ = \int_{F(d,d-j+1)}\int_{F(d,m_k+1)}\cdots \int_{F(d,m_1+1)}f_{\bf m}(u_1,L_1,\dots ,u_k,L_k,u,L)\nonumber\\
&\qquad \times \,\psi_{m_1}(K_1,d(u_1,L_1))\cdots \psi_{m_k}(K_k,d(u_1,L_1))\, \psi_{d-j}(K^*,d(u,L))
\end{align}
was proved. Here, the index ${\bf m}$ stands shortly for $(m_1,\dots ,m_k,d-j)$ and the function $f_{\bf m}$ is of the form
$$
f_{\bf m}(u_1,L_1,\dots ,u_k,L_k,u,L) = G_{\bf m}(u_1,\dots ,u_k,u)\cdot \phi_{\bf m}(u_1,L_1,\dots ,u_k,L_k,u,L).
$$
Also, $G_{\bf m}\ge 0$ is measurable, but unbounded, whereas $\phi_{\bf m}$ is a signed continuous function. Both functions are given explicitly in \cite{HRW2}. The special case where $k=1$ was treated in \cite{HRW} and has been used
in the previous section for dimension $d=4$.
In this generality, the formula requires that the bodies $K_1,\dots ,K_k,K^*$ are in general position. We need not go into details here, since we will use an argument to avoid this condition. Namely we consider, as in \cite{HRW2}, the bounded approximation
$$V_{\bf m}^{(\varepsilon)}(K_1,\dots ,K_k,K^*)\ge 0,\quad \varepsilon > 0,$$
which can be described similarly as in \eqref{genflagform}, but with a function $f_{\bf m}^{(\varepsilon)}$ which is obtained by replacing $G_{\bf m}(u_1,\dots ,u_k,u)$   by the bounded, measurable and non-negative function
$$
G_{\bf m}^{(\varepsilon)}(u_1,\dots ,u_k,u) = G_{\bf m}(u_1,\dots ,u_k,u){\bf 1}\{\operatorname{dist}(o,\text{conv}\{
u_1,\dots
,u_k,u\})\ge \varepsilon\} ,$$
where $\text{conv}\{
u_1,\dots
,u_k,u\}$ denotes the convex hull of $u_1,\dots
,u_k,u\in\R^d$. Since $G_{\bf m}(u_1,\dots ,u_k,u)=0$ if $u_1,\dots ,u_k,u$ are linearly dependent, we obtain
$$
G_{\bf m}^{(\varepsilon)} \nearrow G_{\bf m}$$
as $\varepsilon \searrow  0$. As is shown in \cite[Section 6]{HRW2}, this implies
$$
V_{\bf m}^{(\varepsilon)}(K_1,\dots ,K_k,K^*)\nearrow V_{\bf m}(K_1,\dots ,K_k,K^*)
$$
for all $K_1,\dots ,K_k,K \in{\cal K}^d$, without an additional condition of general position. The reason is that $V_{\bf m}(K_1,\dots ,K_k,K^*)$ obeys a curvature representation over the normal bundles of the bodies $K_1,\dots ,K_k,K^*$ (this was shown in \cite{HR}) and the truncated functional $V_{\bf m}^{(\varepsilon)}(K_1,\dots ,K_k,K^*)$ has a similar representation (this was shown in \cite{HRW2}).

Now we can proceed with the proof.
The definition of the truncated functionals and the monotone convergence transfers to the mean values
\begin{align*}
&\overline V_{\bf m}^{(\varepsilon)}(X,\dots ,X,K^*)\\
&\ = \int_{F(d,d-j+1)}\int_{F(d,m_k+1)}\cdots \int_{F(d,m_1+1)}f_{\bf m}^{(\varepsilon)}(u_1,L_1,\dots ,u_k,L_k,u,L)\nonumber\\
&\qquad \times \overline\psi_{m_1}(X,d(u_1,L_1))\cdots \overline\psi_{m_k}(X,d(u_1,L_1))\, \psi_{d-j}(K^*,d(u,L))
\end{align*}
and
\begin{align*}
&\overline V_{\bf m}^{(\varepsilon)}(X,\dots ,X,K^*)\\
&\qquad =\gamma^k \int   V_{\bf m}^{(\varepsilon)}(K_1,\dots ,K_k,K^*)\, \mathbb{Q}^k(d(K_1,\ldots,K_k))\\
&\qquad \nearrow \gamma^k \int   V_{\bf m} (K_1,\dots ,K_k,K^*)\, \mathbb{Q}^k(d(K_1,\ldots,K_k))\\
&\qquad =
\overline V_{\bf m}(X,\dots ,X,K^*),
\end{align*}
as $\varepsilon \searrow 0$, compare the derivation of \eqref{intmean}.
Due to our assumptions, the mean flag measures $\overline\psi_{m_1}(X,\cdot),\dots,$
$\overline\psi_{m_k}(X,\cdot)$ are determined (recall that $m_i\ge j+1$), and thus $\overline V_{\bf m}^{(\varepsilon)}(X,\dots ,X,K^*)$ is determined for all $\varepsilon>0$, which gives us $\overline V_{\bf m}(X,\dots ,X,K^*)$ for all $K$. From \eqref{nonisotropic2}, we deduce that $\overline V(X [j], K[d-j] )$ is given for all $K$.

In the remaining part of the proof, we argue as in the 4-dimensional case. Let $\varphi_g$ be a valuation of the form
$$
\varphi_g (M) = \int_{F(d,j+1)} g(u,L)\,\psi_j(M,d(u,L)),
$$
where $M\in\mathcal{K}^d$ and $g$ is a continuous function on $F(d,j+1)$. Any such valuation is translation invariant,
continuous and homogeneous
of degree $j$. From Alesker's result we obtain that for each $\varepsilon>0$ there are $n\in\N$,
$\lambda_1,\ldots,\lambda_n\in\R$ and $K_1,\ldots,K_n\in\mathcal{K}^d$ such that
$$
|\varphi_g(M)-\sum_{i=1}^n\lambda_i V(M[j],K_i [d-j])|\le c_4\, \varepsilon V_1(M)^j,
$$
with a constant $c_4$ (depending only on the dimension $d$),
for all $M\in\mathcal{K}^d$. Hence, under the moment assumption \eqref{moment},
we conclude that
 the mean values $\overline\varphi_g (X)$ are determined.
The corresponding integrals then determine $\overline\psi_j(X,\cdot )$.
\end{proof}

To complete the proof of Theorem \ref{1}, we notice that we obtained already the mean values $p= \e^{-\overline V_d(X)}$ and $\overline \psi_{d-1}(X,\cdot)$, the starting point for the recursion. Proposition \ref{prop} thus shows that all mean flag measures
$$\overline \psi_{d-1}(X,\cdot),\dots , \overline \psi_{1}(X,\cdot)$$
are determined.  Using \eqref{genflagform} again,
together with the truncation method explained above, we obtain all mean values
$$
\overline V_{m_1,\dots, m_k}(X,\dots,X)
$$
with $m_i\in\{ 1, \dots ,d-1\}$ and such that $m_1+\ldots +m_k= (k-1)d$.

By \eqref{Euler}, the intensity $\gamma$ is determined.

\section{Remarks}

In the following we comment on extensions and applications of the main result, Theorem \ref{1}.

\medskip\noindent
{\bf 1}. All notions used in the theorem and its proof, like mixed volumes, mixed translative functionals and flag measures, are additive and continuous (scalar or measure-valued) functionals  on ${\cal K}^d$. Hence they have an additive extension to {\it polyconvex} sets (finite unions of convex bodies). Therefore, Theorem \ref{1} also holds for Boolean models $Z$ with polyconvex grains, after appropriate modifications. We mention that the results in \cite{W01} were obtained for polyconvex grains. This required a modified integrability condition (see \cite[formula (2)]{W01}) and correspondingly \eqref{moment} has to be adjusted. Also, the assumption has to be made that the particles have Euler characteristic one, since otherwise we do not obtain the intensity $\gamma$ but $\gamma \overline V_0(X)$. We refrain from formulating the corresponding result.

\medskip\noindent
{\bf 2}. Let $\bf Val$ denote the space of translation invariant continuous valuations on ${\cal K}^d$ and ${\bf Val}_j$ the subspace of valuations which are homogeneous of degree $j$, $j=0,\ldots,d$. Every $\varphi\in {\bf Val}$ has a unique decomposition $\varphi = \sum_{j=0}^d \varphi_j$, $\varphi_j\in {\bf Val}_j$, where $\varphi_0$ is a multiple of $V_0$ and $\varphi_d$ is a multiple of $V_d$. Alesker's approximation result indicates that the formulas \eqref{nonisotropic} have counterparts for the homogeneous components $\varphi_j$, $j=0,\ldots,d$, of $\varphi\in {\bf Val}$, where mean values of $X$ for certain mixed valuations occur on the right. This is indeed the case and was obtained in \cite[Corollary 6.3]{W17} in a direct way using translative integral formulas for the functionals $\varphi_j$.

\medskip\noindent
{\bf 3.} In this context, it should be remarked that the mean values for mixed volumes of $Z$ not only determine the mean flag measures
$$
\int_{{\cal K}^d_0} \psi_j(K,\cdot)\, {\mathbb Q} (dK),\quad j=1,\dots ,d-1,
$$
(as formulated in Theorem \ref{1}), but also all mean values
$$
\int_{{\cal K}^d_0} \varphi_j(K)\, {\mathbb Q} (dK),$$
for $\varphi_j\in {\bf Val}_j$, $j=0,\dots ,d$ (as it was demonstrated in the proof). For isotropic $Z$, this does not lead to new information, since then $\varphi_j$ can be assumed to be rotation invariant. By Hadwiger's theorem (see \cite[Theorem 6.4.3]{S}), this gives $\overline V_j(X)$, thus we are back in the situation of \eqref{MilesDavy}, which is clear since \eqref{MilesDavy} and \eqref{nonisotropic} are equivalent in the isotropic case.

\medskip\noindent
{\bf 4.} For non-isotropic $Z$, the knowledge of $\overline\varphi_j (X)$ can give us some information on the shape. For $j=1$, we can choose $\varphi_1(K) = h(K,\cdot)$. Thus we obtain the Minkowski mean $\tilde M(X):= \gamma^{-1}M(X)$ of $X$. This already shows that the underlying Poisson process is completely determined, if the distribution $\mathbb Q$ is concentrated on a single shape. As images of the mean flag measures, we also get the mean area measures
$$
\int_{{\cal K}^d_0} S_j(K,\cdot)\, {\mathbb Q} (dK),\quad j=1,\dots ,d-1.
$$
In particular, we get the Blaschke mean $\tilde B(X):=\gamma^{-1}B(X)$. This, together with $\tilde M(X)$ and the mean particle volume $\tilde V_d(X) :=\gamma^{-1}\overline V_d(X)$, determines the process $X$, if $\mathbb Q$ is concentrated on two different shapes $K_1$ and $K_2=cK_1, c\not= 1$, with inner points. In general, the mean area measures give us the first $d$ moments of the distribution ${\mathbb P}_\xi$, if the typical grain is of the form $\xi K_0$, with a fixed body $K_0\in{\cal K}^d_0$ and a random variable $\xi\ge 0$. For many parametric situations this implies that the whole distribution ${\mathbb Q}$ (and thus the body $K_0$) is determined.

\section{Non-stationary Boolean models}

In \cite{W01} (see also \cite[Section 11.1]{SW}), the formulas \eqref{nonisotropic} were generalized to certain non-stationary Boolean models $Z$, namely those for which the underlying Poisson process $X$ has a translation regular intensity measure $\Theta$. This condition means that $\Theta$ is absolutely continuous with respect to a translation invariant, locally finite measure $\tilde\Theta$ on ${\cal K}^d$. Explicitly, we have
\begin{equation}\label{transregular}
\Theta (A) = \int_{{\cal K}^d_0}\int_{\R^d} {\bf 1}_A(K+x)\eta(K,x)\,\lambda_d(dx)\,{\Q}(dK),
\end{equation}
for all Borel sets $A\subset {\cal K}^d$ and with a measurable function $\eta\ge 0$ on ${\cal K}^d_0\times\R^d$. In order to simplify the following presentation, we make the additional assumption that $\eta$ is continuous and does not depend on $K$, that is, $\eta (x)$ is the spatial intensity of $X$ at $x\in\R^d$. We call such a Boolean model {\it regular}. Under these conditions, the decomposition \eqref{transregular} is unique and the densities of mixed volumes and mixed functionals in the following formulas exist pointwise (not only almost everywhere). The counterpart to \eqref{nonisotropic} then reads
\begin{align}
  \overline V_d(Z;z) &= 1-\e^{-\overline V_d(X;z)} ,\label{vol}
  \\
  \overline V(Z [d-1], K[1];z) &= \e^{-\overline
    V_d(X;z)}\overline V(X [d-1],K [1];z) ,\label{surf}
\end{align}
and
\begin{align}\label{nonstationary}
  &\binom{d}{j}\overline V(Z [j], K[d-j];z ) \nonumber\\
	&\qquad =\e^{-\overline V_d(X;z)}
  \sum_{\mathbf{m}\in\operatorname{mix}(j)}
  \frac{(-1)^{{|\mathbf{m}|}-1}}{|\mathbf{m}|!}\overline V_{\mathbf{m},d-j}(X,\dots,X, K^*;z,\dots ,z),
\end{align}
for $j=0,\dots ,d-2$, all $z\in\R^d$, and all $K\in{\cal K}^d$. Here, on the left of \eqref{vol}, the function $\overline V_d(Z;\cdot)$ is defined as the Radon-Nikodym derivative of the expectation measure $\E \lambda_d(Z,\cdot)$ (where $\lambda_d(Z,\cdot)$ denotes the restriction of $\lambda_d$ to $Z$). Similarly, $\binom{d}{j}\overline V(Z [j], K[d-j];\cdot )$, for $j=0,\dots, d-1,$ is defined as the Radon-Nikodym derivative of the expectation measure $\E \Phi_{j,d-j}(Z,K^*;\cdot\times \R^d)$, where $\Phi_{j,d-j}(Z,K^*;\cdot)=\Phi_{j,d-j}^{(0)}(Z,K^*;\cdot)$ is the mixed translative measure, the local variant of the mixed functional $V_{j,d-j}(Z,K^*)$ (see \cite[Section 5.2]{SW}). Recall here that $\binom{d}{j}V(M[j],K[d-j]) =V_{j,d-j}(M,K^*)$.  Also, it should be mentioned that $\Phi_{j,d-j}(Z(\omega),K^*;\cdot\times\R^d)$ exists by additive extension and is a locally finite signed measure on $\R^d$, for all sets $Z(\omega)$ in the extended convex ring. Thus, the random variable $\Phi_{j,d-j}(Z,K^*;A\times\R^d)$ is defined and finite for every bounded Borel set $A\subset\R^d$.

On the right, the mean mixed functionals $\overline V_{\mathbf{m},d-j}(X,\dots,X, K^*;\cdot)$ are defined more directly as Radon-Nikodym derivatives of the expectation measures
\begin{equation}\label{instatexp}
\E\sum_{(M_1,\ldots ,M_j)\in X^j_{\not =}}\Phi_{\mathbf{m},d-j}(M_1,\ldots,M_j,K^*;\cdot\times\R^d)
\end{equation}
with respect to $\lambda_d^j$ (the $j$-fold product measure of $\lambda_d$). The  summation here is over all $j$-tuples $(M_1,\dots, M_j)\in (\mathcal{K}^d)^j$ of pairwise different particles $M_i$ in $X$ and the mixed measure $\Phi_{\mathbf{m},d-j}(M_1,\ldots,M_j,K^*;\cdot)$
 is a finite measure on $(\R^d)^{j+1}$ and arises from the (iterated) translative integral formula for the local analog of the intrinsic volumes, the curvature measures (see \cite[Section 5]{W01} and \cite[Section 6.4]{SW}). As was shown in \cite[Theorem 5]{W01}, the measure \eqref{instatexp} is indeed locally finite and absolutely continuous. Moreover, the derivative satisfies
\begin{align}\label{derivative}
&\overline V_{\mathbf{m},d-j}(X,\dots,X, K^*;z_1,\ldots, z_j) \nonumber\\
&\qquad = \int_{({\cal K}_0^d)^j}\int_{(\R^d)^j}\eta(z_1-x_1)\cdots\eta(z_j-x_j)\\
&\qquad  \qquad \times\Phi_{\mathbf{m},d-j}(M_1,\ldots,M_j,K^*;d(x_1,\ldots ,x_j)\times{\R}^d)\,\Q^j(d(M_1,\ldots ,M_j)) .\nonumber
\end{align}
Since we require $\eta$ to be continuous, this integral representation holds for all $(z_1,\dots ,z_j)\in(\R^{d})^j$, not only almost everywhere.
The absolute continuity of $\overline V(Z [j], K[d-j];\cdot\times\R^d )$ was then shown in \cite[Theorem 6]{W01} (see also \cite[Theorem 11.1.2]{SW}). Here, in lines 10--13 of the proof \cite[p.~54]{W01},  the differential $\lambda_d(dz_1)\cdots \lambda_d(dz_k)$ has to be replaced by $\lambda_d(dz)$ and consequently the variables $z_1,\dots, z_k$ in the integrands have to be replaced by one variable $z$.

Since for $j=0$ the decomposition result, which we used for  mixed volumes and mixed functionals, also holds for the mixed measures in \eqref{derivative}, we obtain from \eqref{nonstationary} the following formula for the density of the Euler characteristic at $z\in\R^d$,
\begin{align}\label{nonstationaryEuler}
  \overline V_0(Z;z )  &=\e^{-\overline V_d(X;z)}
  \sum_{\mathbf{m}\in\operatorname{mix}(0)}
  \frac{(-1)^{{|\mathbf{m}|}-1}}{|\mathbf{m}|!}\overline V_{\mathbf{m}}(X,\dots,X;z,\dots ,z).
\end{align}

In order to discuss the question which information on $X$ can be extracted from the local densities of $Z$ in \eqref{nonstationary}, let us start with the simpler equations \eqref{vol} and \eqref{surf}. Obviously, \eqref{vol}
determines $q(z):= \e^{-\overline V_d(X;z)}$ and, given $q(z)$, \eqref{surf}
determines $\overline V(X [d-1],K [1];z)$, for all $K\in {\cal K}^d$. From \eqref{derivative} we get
\begin{align*}
d\overline V(X [d-1],K [1];z) =  \int_{{\cal K}_0^d}\int_{\R^d}\eta(z-x)\,\Phi_{d-1,1}(M,K^*;dx\times{\R}^d)\,\Q(dM).
\end{align*}
and \cite[Corollary 3]{GW02} yields
\begin{align*}
d\overline V(X [d-1],K [1];z) &=  \int_{{\cal K}_0^d}\int_{\Nor (M)}\eta(z-x)h^*(K,u)\,\Theta_{d-1}(M;d(x,u))\,\Q(dM)\\
&= \int_{S^{d-1}} h^*(K,u)\, \mu_z(du) ,
\end{align*}
where $\mu_z$ is the measure
$$
\mu_z (\cdot):= \int_{{\cal K}_0^d}\int_{\Nor (M)}\eta(z-x){\bf 1}(u\in\cdot)\,\Theta_{d-1}(M,d(x,u))\,\Q(dM)
$$
on $S^{d-1}$ and $\Theta_{d-1}(M,\cdot)$ is the $(d-1)$-st support measure (normalized as in \cite{S}) on the normal bundle $\Nor (M)$ of $M$.
If we write $\omega_{d-1}$ for spherical Lebesgue measure and center $\mu_z$ by
$$
\mu_z^* (\cdot):= \mu_z(\cdot) - \frac{1}{\kappa_d}\int_{S^{d-1}} {\bf 1}(u\in\cdot)\langle u,c\rangle\,\omega_{d-1}(du),
$$
with
$$c:= \int_{S^{d-1}} u \,\mu_z(du),$$
then the collection of densities $\overline V(X [d-1],K [1];z), K\in{\cal K}^d$, determines $\mu_z^*$ uniquely (see \cite[Lemma 4]{GW02}).

This observation already emphasizes a difficulty which arises for non-statio\-nary $Z$. Namely, mixed volumes are translation invariant in each component. Therefore, they only provide information on the bodies involved which is of `centred type' (surface area measures, flag measures, centred support functions, etc.). Consequently, in the non-stationary case, a centredness property of the Boolean model $Z$ seems necessary. Let us assume, for fixed $z\in\R^d$,  that $Z$ is {\it centred at $z$}, meaning that $c=0$  (that is, $\mu_z$ is centred) or, equivalently, $\mu_z^* = \mu_z$.

In dimensions 2 and 3, and for a Boolean model $Z$ which is centred at $z$, we obtain uniqueness theorems similar to our Theorem \ref{1}. The case $d=2$ was discussed in \cite{W00} (and again in \cite{GW02}) and the case $d=3$ in \cite{GW02}. It was shown that the local densities $\overline V(Z [j],K [d-1];z)$, for $j=0,\dots ,d$ and all $K\in {\cal K}^d$, determine the convoluted intensity function
$$
\overline V_0(X;z) := \int_{\R^d} \eta (z-x)\,\tilde\Phi_0(X,dx),
$$
where
$$
\tilde\Phi_0(X,\cdot) := \int_{{\cal K}_0^d} \,\Phi_0(M,\cdot)\,\Q(dM)
$$
is the expected (with respect to $\Q$) Gaussian curvature measure of the particles. If $Z$ is stationary, then $\eta\equiv \gamma$, hence $\overline V_0(X;z)= \gamma$ since $\Phi_0(M,\R^d) = V_0(M)= 1$. Note that these results are of local type, they hold for a fixed position $z\in\R^d$ and require only the knowledge of the local densities of $Z$ in $z$. Since in \cite{W00} and \cite{GW02} there were some erroneous statements about the centredness of certain functions, we sketch here the proofs of these results. As explained above, we obtain $q(z)$ and $\mu_z^*$. Then, for $d=2$, only the equation \eqref{nonstationaryEuler} remains, reading
\begin{align*}
  \overline V_0(Z;z )  &=q(z)\left(\overline V_0(X;z) - \frac{1}{2}\overline V_{1,1}(X,X;z,z)\right).
\end{align*}
As was shown in \cite{GW02} (combine p.~332, lines -1 to -3, and Corollary 2),
$$
\overline V_{1,1}(X,X;z,z) = \int_{S^1}\int_{S^1} \alpha (u,v) \det (u,v)\mu_z(du)\mu_z(du),
$$
where $\alpha (u,v)\in [0,\pi]$ is the angle between $u$ and $v$ and $\det (u,v)$ is the absolute determinant. For centred $Z$,
$\mu_z=\mu_z^*$ is known, which gives us $\overline V_{1,1}(X,X;z,z)$ and thus $\overline V_0(X;z)$.

For $d=3$, we have two remaining equations,
$$
 \overline V(Z[1],K[2];z )  =q(z)\frac{1}{3}\left(\overline V_{1,2}(X,K^*;z) - \frac{1}{2}\overline V_{2,2,2}(X,X,K^*;z,z)\right)
$$
and
$$
 \overline V_0(Z;z )  =q(z)\left(\overline V_0(X;z) - \overline V_{1,2}(X,X;z,z)+\frac{1}{6}\overline V_{2,2,2}(X,X,X;z,z,z)\right).
$$
Concerning the first equation, it was shown in \cite[p.~342]{GW02} that
\begin{align*}
\overline V_{2,2,2}&(X,X,K^*;z,z)\\
&= \int_{S^2}\int_{S^2} \int_{S^2}\alpha (u,v,w) \det (u,v,w)\mu_z(du)\mu_z(dv)S_2(K^*,dw),
\end{align*}
where $\alpha (u,v,w)$ is now a spherical angle between $u,v,w$ (the spherical volume of the spherical triangle spanned by these vectors) and $\det (u,v,w)$ is the absolute determinant. Again, using that $Z$ is centred and hence that $\mu_z=\mu_z^*$ is known, we obtain $\overline V_{1,2}(X,K^*;z)$. Letting $K$ vary, Lemma 5 in \cite{GW02} shows that $h_z^*$ is determined for $d=3$. Here,
$$
h_z(u) := \int_{{\cal K}_0^d}\int_{\R^d}\eta(z-x)\rho(M;u,dx)\,\Q(dM),\quad u\in S^{d-1},
$$
is the density of the support function of $X$, defined via the local version of the support function $h(M,\cdot)$, the support kernel $\rho (M;\cdot,dx)$, and $h_z^*$ is the centred version
$$
h_z^*(\cdot) := h_z(\cdot) - \frac{1}{\kappa_d}\left\langle\cdot , \int_{S^{d-1}} uh_z(u)\,\omega_{d-1}(du)\right\rangle.
$$

From \cite[p.~343]{W01}, we get
$$
\overline V_{2,2,2}(X,X,X;z,z,z) = \int_{S^2}\int_{S^2} \int_{S^2}\alpha (u,v,w) \det (u,v,w)\mu_z(du)\mu_z(dv)\mu_z(dw).
$$
Using \cite[p.~344]{W01}, we obtain
\begin{align*}
\overline V_{1,2}(X,X;z,z) &= \int_{S^2} h_z(-u)\mu_z(du)\\
&= \int_{S^2} h_z^*(-u)\mu_z^*(dv),
\end{align*}
where we used that $\mu_z=\mu_z^*$ is centred (by assumption). This finally shows that
  $V_0(X;z)$ is determined.  We emphasize that the assumption $h_z=h_z^*$ is not necessary for this argument (and also not sufficient, in contrast to a claim in \cite{GW02}).

In these small dimensions 2 and 3, spherical quantities like the measure $\mu_z$ and the function $h_z$, as well as their centred versions are sufficient for integral representations of the mean mixed functionals which occur in \eqref{nonstationary}. If we look at the proof of Theorem \ref{1}, for $d\ge 4$, it combines two fundamental steps which would be essential also in the non-stationary case. The first is the use of flag measures to obtain a corresponding integral representation of the mean mixed functionals of $X$. This step can in fact be made also for the local mixed densities in \eqref{derivative}, using a local flag representation of the mixed curvature measures $\Phi_{\mathbf{m},d-j}(M_1,\ldots,M_j,K^*;\cdot)$ proved in \cite[Theorem 5]{HRW2} (and based on a recent result in \cite{HR}). Since \eqref{derivative} depends on the location $z\in \R^d$, a more general flag measure is involved, the {\it flag support measure} $\theta_j(K,\cdot)$. The latter is a finite Borel measure on the extended flag space $\R^d\times F(d,j+1)$ which, in extension of \eqref{flag}, is defined by
$$
\theta_j(K,\cdot)=\int_{G(d,j+1)}\int_{S^{d-1}\cap U}\mathbf{1}\{(x,u,U)\in\cdot\}\, \Theta_j^U(K|U,d(x,u))\, \nu_{j+1}(dU).
$$
Here, $\Theta_j^U(K|U,\cdot)$ is the $j$-th support measure of the projection of $K$ onto $U$, calculated in
the subspace $U$. For properties, different normalizations and isomorphic versions of support measures and their flag versions, we refer to \cite{S} and \cite{HTW}.

The second essential step in the proof of Theorem \ref{1} was to proceed from the mean mixed volumes $\overline V (X [j], K[d-j])$, for $K\in{\cal K}^d,$ to the mean flag measure $\overline\psi_j(X,\cdot)$, for the next round of the recursion. This step was based on Alesker's approximation of translation invariant, continuous valuations on ${\cal K}^d$ by linear combinations of mixed volumes. In the non-stationary case, in each recursion step, we  would obtain the local density
\begin{align*}
\overline V(X [j],K [d-j];z) =  \int_{{\cal K}_0^d}\int_{\R^d}\eta(z-x)\,\Phi_{j,d-j}(M,K^*;dx\times{\R}^d)\,\Q(dM)
\end{align*}
and would need to show that these densities, given for all $K\in{\cal K}^d$ (and a fixed $z\in\R^d$), determine the convoluted mean flag support measure
$$ \mu_{z,j}(\cdot):=\int_{{\cal K}^d_0}\int_{\R^d\times F(d,j+1)}\eta(z-x){\bf 1}((u,L)\in\cdot ) \,\theta_j(M,d(x,u,L))\,\Q(dM),$$
respectively its centred version $ \mu_{z,j}^*$.
Here, Alesker's theorem seems not applicable, since the dependence on $z$ (and thus the occurrence of $x$ in the flag measure) does not allow to define an appropriate valuation which is translation invariant.

It is also likely that further centredness conditions like $\mu_{z,j}=\mu_{z,j}^*$ are needed for such a step. We mention that a simple sufficient condition which implies that all densities of area measures, support functions and flag measures at $z$ are centred quantities is given by the symmetry of $\eta$ at $z$ (that is $\eta (z-x)=\eta (z+x)$ for all $x\in\R^d$) together with the symmetry of the particles (that is $K=K^*$, for $\Q$-almost all $K$).

\end{document}